\theoremstyle{plain}
\newtheorem{theorem}{Theorem}[section]
\theoremstyle{theorem}
\newtheorem{lem}[theorem]{Lemma}
\theoremstyle{definition}
\newtheorem{defn}[theorem]{Definition}
\newtheorem{rmk}[theorem]{Remark}
\newtheorem{cl1}{Claim}[]
\newtheorem{cl2}{Claim}[]
\newcommand{\m}{M\"{o}bius }
\newcommand{\R}{{\mathbb R}}
\title{ Weakly Circle-Preserving Maps in Inversive Geometry}		
\author{Joel C. Gibbons}
\address{Logistic Research \& Trading Co., P.O. Box 63, St. Joseph, MI 49085}
\author{ Yusheng Luo}
\address{Dept. of Mathematics, National Univ. of Singapore, 10 Lower Kent Ridge Road, Singapore 119076}
\date{Aug 8, 2013, v.arXiv}
\begin{document}

\begin{abstract}
  Let $\mathbb{S}^n$ be the standard $n$-sphere embedded in $\R^{n+1}$. A mapping $T: \mathbb{S}^n \to \mathbb{S}^n$,
not assumed continuous or even measurable, nor injective,  is called weakly circle-preserving if the image of any 
circle under $T$  is contained in some circle  in the range space $\mathbb{S}^n$. The main result of this paper shows that any weakly circle-preserving map satisfying a very mild condition on its range $T(\mathbb{S}^n)$ must be a \m transformation. 
\end{abstract}

\maketitle
%%%%
% Section1 
%%%%%
\section{Introduction}

The object of this paper is to give a   characterizations of \m transformations acting on ${\mathbb S}^n$,
under very weak conditions on such a map $T: {\mathbb S}^n \to {\mathbb S}^n$, 
which do not assume invertibility or even continuity of the map.

The  standard $n$-sphere ${\mathbb S}^n$, viewed in $\mathbb{R}^{n+1}$  is the real algebraic set 
$$
\mathbb{S}^n := \{ (x_0, x_1,  \cdots , x_{n})\in \R^{n+1} : \sum_{i=0}^{n} x_i^2 =1\}.
$$

The set of \m transformations are the set of invertible maps $F : \mathbb{S}^n \to \mathbb{S}^n$
generated by inversions. Such maps send circles to circles and $(n-1)$-spheres to $(n-1)$-spheres.
The study of geometric properties invariant under such transformations is called inversive geometry.

One can also identify $\mathbb{S}^n$ with $\R^n_\infty:=\R^n\cup\{\infty\}$ under sterographic projection.
In the space $\R^n_{\infty}$, an \begin{em} inversion \end{em}(or a \begin{em}reflection\end{em}) in an $(n-1)$-sphere $S(a,r):=\{x\in\R^n:|x-a| = r\}$ is the function $\phi$ defined by $\phi(x) = a+(\frac{r}{|x-a|})^2(x-a)$. $\phi$ is well defined on $\R^n_\infty - \{a,\infty\}$, and at these two points, we define $\phi(a) = \infty$ and $\phi(\infty) = a$. A \begin{em}reflection\end{em} in a hyperplane 
is a usual reflection in $\R^n$ and fixes the point $\infty$ in $\R^n_{\infty}$.
We define  a {\em \m transformation} on $\R^n_\infty \cong \mathbb{S}^n$ to be a finite composition of reflections in 
$(n-1)$-spheres or hyperplanes. The group of all \m transformations is called 
the Generalized \m Group $GM(\R^n_\infty)$, following Beardon \cite[Chapter 3]{B83}.
Note that in dimension $n=2$, identifying $\R^2_{\infty}$ with the Riemann sphere $\hat{\mathbb{C}}$, \m transformations include all
 the linear fractional transformations $z\mapsto \frac{az+b}{cz+d}$,  which are orientation-preserving maps 
 and also the conjugate ones $z\mapsto\frac{a\bar z+b}{c\bar z+d}$,
  which are orientation reversing maps.

%%%%
% Sec. 1.1
%%%%%
\subsection{Main Result}
We will study mappings satisfying the following very  weak version of
the circle-preserving property. 

\begin{defn}\label{def-11}
A map $T$ of the $n$-sphere to itself is called {\em weakly circle-preserving} if for every circle $C \subset \mathbb{S}^n$, $T(C)$ lies in some circle.
\end{defn}

\begin{defn} 
A map $T$ of the $n$-sphere to itself is called {\em weakly sphere-preserving} if for every $(n-1)$-sphere $S_{n-1} \subset \mathbb{S}^n$, $T(S_{n-1})$ lies in some $(n-1)$-sphere.
\end{defn}

In these two definitions we do not assume  that $T$ is injective or
even  continuous.  There are many such maps, including some that are
not \m transformations.
For example, any map $T: \mathbb{S}^n \to \mathbb{S}^n$ whose image is finite and consists of $(n+1)$ points or less is automatically weakly sphere-preserving and any map on $n$-sphere with image consisting of $3$ points or less is weakly circle-preserving. 
Nevertheless such maps are quite restricted when further assumptions are imposed on them. 

The key restrictions we consider are the following ``general position" conditions on the image of the map.

\begin{defn}
A subset $B$ of $\mathbb{S}^n$ is said to lie in {\em circular general position} if for any circle $C$, 
the complement of $C$ contains at least two points of $B$.
\end{defn}

\begin{defn}\label{def-12}
A subset $B$ of $\mathbb{S}^n$ is said to lie in {\em spherical general position} if for any $(n-1)$-sphere $S_{n-1}$, the complement of $S_{n-1}$ contains at least two points of $B$.
\end{defn}

It is obvious from the definition that $B$ must contain at least $n+3$ points,
and there do exist many $(n+3)$ point sets in spherical
general positon.  If $B$ lies in spherical general position, 
then every set containing $B$ lies in spherical general position. However, if $B$ lies in spherical general position, it
is not clear whether it always contains an  $n+3$ subset of $B$ lies in spherical general position,
and this result does always not hold in dimension $n=2$.

Our main result is as follows.
%%
% Main theorem 1.3
% %

\begin{theorem}\label{thm-main}
For $n\geq 3$, $T:\mathbb{S}^n\longrightarrow \mathbb{S}^n$ be a weakly circle-preserving map. If $T(\mathbb{S}^n)$ is in spherical general position and there is a $2$-sphere $S_2$ with $T(S_2)$ in circular general position, then $T$ is a \m transformation.
\end{theorem}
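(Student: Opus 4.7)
The plan is a bootstrap from the two-dimensional case on the distinguished 2-sphere $S_2$, followed by a spreading argument that extends the \m property to auxiliary 2-spheres and then to all of $\mathbb{S}^n$. The proof divides naturally into: (i) showing $T|_{S_2}$ is a \m map onto some 2-sphere $\tilde{S}_2$; (ii) normalizing $T|_{S_2}$ to the identity via post-composition with a \m transformation of $\mathbb{S}^n$; (iii) extending the \m property to other 2-spheres built from circles of $S_2$; (iv) piecing the data together to conclude that $T$ is either the identity or the \m reflection $\sigma$ across $S_2$.

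For step (i), note that $T|_{S_2}$ is itself weakly circle-preserving, since every circle of $S_2$ is a circle of $\mathbb{S}^n$, and $T(S_2)$ is in circular general position by hypothesis. I would first show that $T(S_2)$ is contained in a single 2-sphere $\tilde{S}_2$: pick a circle $C_0\subset S_2$ whose image $T(C_0)$ spans a well-defined circle $\widetilde{C_0}$, and for any other circle $C\subset S_2$ meeting $C_0$ in two points, the image $T(C)$ lies in a circle through the two corresponding image points; the pencil structure of circles in $\R^{n+1}$ then forces $\bigcup_C T(C)$ to lie in a single 2-sphere. Once $T(S_2)\subseteq\tilde{S}_2$ is established, the two-dimensional analogue of the present theorem---taken to be proved earlier in the paper---yields that $T|_{S_2}\colon S_2\to\tilde{S}_2$ is a \m bijection. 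Post-composing $T$ with a \m extension of its inverse, I may assume without loss of generality that $T|_{S_2}=\mathrm{id}_{S_2}$.

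For the extension steps, fix $p\in\mathbb{S}^n\setminus S_2$ and a generic circle $\gamma\subset S_2$; let $\Sigma$ be the unique 2-sphere containing both $\gamma$ and $p$. Applying the argument of step (i) to $T|_\Sigma$---after verifying that $T(\Sigma)$ lies in circular general position inside its containing 2-sphere---I obtain that $T|_\Sigma$ is a \m map between 2-spheres that agrees with the identity on $\gamma$. The only such \m maps are the identity of $\Sigma$ and the reflection of $\Sigma$ across $\gamma$. Varying $\gamma$ inside $S_2$ then forces $T(p)\in\{p,\sigma(p)\}$, where $\sigma$ is the \m reflection of $\mathbb{S}^n$ across $S_2$. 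A consistency check using circles joining distinct points of $\mathbb{S}^n\setminus S_2$ forces a single global choice, yielding $T=\mathrm{id}$ or $T=\sigma$, both of which are \m transformations for $n\geq 3$.

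The main obstacle is the general-position verification used in step (iii): \emph{spherical} general position of $T(\mathbb{S}^n)$ in $\mathbb{S}^n$ is not obviously strong enough to guarantee \emph{circular} general position of $T(\Sigma)$ inside the 2-sphere $\tilde{\Sigma}$ that contains it, since a circle on $\tilde{\Sigma}$ is generally not the trace of a single $(n-1)$-sphere of $\mathbb{S}^n$. One must exhibit, for each circle $\delta\subset\tilde{\Sigma}$, an $(n-1)$-sphere of $\mathbb{S}^n$ whose intersection with $\tilde{\Sigma}$ is essentially $\delta$, so that the off-$\delta$ points in $T(\mathbb{S}^n)$ promised by spherical general position translate into off-$\delta$ points of $T(\Sigma)$ on $\tilde\Sigma$. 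This is possible but uses the codimension $n-2\geq 1$ in an essential way, which is precisely where the hypothesis $n\geq 3$ enters the argument.
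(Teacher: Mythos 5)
Your overall strategy (bootstrap from the distinguished $2$-sphere, then spread outward along auxiliary $2$-spheres) is in the same spirit as the paper's, but the proposal has a genuine gap at exactly the point you flag as ``the main obstacle,'' and your proposed fix does not work. Spherical general position of $T(\mathbb{S}^n)$ guarantees, for each $(n-1)$-sphere, two points of $T(\mathbb{S}^n)$ off it --- but those two points need not lie in $T(\Sigma)$, so they say nothing about circular general position of $T(\Sigma)$. Nothing in your argument rules out the degenerate possibility that $T$ collapses all of $\Sigma\setminus\gamma$ onto $T(\gamma)$ or onto a single point; in that case the five-point theorem is simply not applicable to $T|_\Sigma$ and step (iii) stalls. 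The paper obtains circular general position differently: since $T$ is already a \m transformation on the sub-sphere, $T(\gamma)$ is an \emph{entire} circle, and if one can additionally find two image points off that circle then circular general position follows for free (any circle either equals $T(\gamma)$ and misses the two extra points, or meets it in at most two points and misses infinitely many). The real work --- Lemma \ref{1point-lemma} and the case analysis in the induction --- is devoted to showing that the degenerate alternative (every $(k+1)$-sphere containing the current sphere contributes at most one new image point) forces $|T(\mathbb{S}^n)-T(S_k^0)|$ to be so small that spherical general position of the \emph{global} image is violated. That counting argument is entirely absent from your proposal, and it is where the hypothesis on $T(\mathbb{S}^n)$ is actually consumed.

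There are two further problems in steps (iii)--(iv). First, a \m map of $\Sigma$ agreeing with the identity on $\gamma$ need not be the identity or the reflection of $\Sigma$ across $\gamma$, because you do not know $T(\Sigma)=\Sigma$: the image can be any $2$-sphere containing $\gamma$, and the \m maps $\Sigma\to\tilde\Sigma$ fixing $\gamma$ pointwise form a much larger family, so $T(p)$ is a priori constrained only to an orbit of positive dimension. Second, the final dichotomy $T=\mathrm{id}$ or $T=\sigma$ presupposes that $S_2$ has codimension one; for $n\geq 4$ the pointwise stabilizer of a $2$-sphere in the \m group is large (roughly an $O(n-2)$ worth of maps), so the conclusion of step (iv) is not even correctly stated. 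Finally, even granting that $T$ is \m on every auxiliary $2$-sphere, passing to a global \m transformation requires a gluing principle; the paper does this by climbing dimensions one at a time (Lemma \ref{2point-lemma}) and invoking Beardon--Minda's result that a map which is \m on every $(n-1)$-sphere is \m. Your proposal would need all three of these repairs to go through.
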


This result strengthens many previous characterizations of \m transformations, which we discuss below. 
We remark that the notion of  the weakly-circle
preserving,  in dimensions $2$ and $3$,  was first studied by the first author  \cite{GW79}, \cite{Gib82},
where it was termed  ``circle-preserving".

\subsection{Previous Results}
Rigidity theorems of \m transformation have been investigated extensively. 
It is clear  that \m transformations take generalized $(n-1)$-spheres
to generalized $(n-1)$-spheres  and a converse was known to M\"{o}bius,
under the assumption that the map $T$ is continuous, 
see Blair \cite[Theorem 5.6]{Bla00}.
A map is called conformal if the map preserves  angles. One can define it formally using conformal manifolds, 
see (Kobayashi \cite{Kob95}, Blair \cite{Bla00}). 
Any \m transformation is a conformal diffeomorphism on $\mathbb{S}^n$. A result of  Liouville\cite{Lio1850}  in 1850 
 asserts (in modern form) a  local converse: when $n\geq 3$, any smooth conformal diffeomorphism of a simply
 connected open domain $U$ of ${\mathbb S}^n$ into ${\mathbb S}^n$ is 
 the restriction of a \m transformation. In particular, for $n\geq 3$, the conformal group on $\mathbb{S}^n$ 
 is precisely the generalized \m group.

In 1937, Carath\'eodory \cite{Car37} proved that  
a local version of the circle-preserving condition is enough to
force a map to be (part of)
a \m transformation.
Given a domain $U \subset \R^2$, and a 1-1 map $T:U\longrightarrow \R^n$ with $n\geq 2$, such that 
fpr any circle $C$ in $U$ that is contractible in $U$, $T(C)$ is a circle,  then $T(U)$ lies inside a plane 
and $T$ is a restriction of a \m transformation in $\R^2_{\infty}$.

 More recently in 2001, Beardon and Minda \cite{BM01} proved
 that $T:\mathbb{S}^n\longrightarrow\mathbb{S}^n$ is a \m transformation if and only if $T$ locally maps 
 each $(n-1)$-sphere onto $(n-1)$-sphere.
 In 2005  Li and Wang \cite{LW05} showed that $T:\mathbb{S}^n \longrightarrow \mathbb{S}^n$ is a \m transformation 
 if and only if $T$ is circle preserving and $T(\mathbb{S}^n)$ is not a circle. 
 All the above characterizations assume that $T$ is circle (or sphere) preserving, i.e., 
 $T$ maps circles (or spheres) onto circles (or spheres). 

The  weakly circle preserving assumption is  much
less restrictive than those assumed on maps above.
This assumption was introduced in 1979 by the first author with Webb \cite{GW79}.
Unfortunately, that paper used  the term ``circle-preserving map" 
to mean `` weakly circle-preserving map" as above. The paper \cite{GW79} established a local result, under the weakly circle preserving map hypothesis.  A special case  is stated below  in Theorem \ref{six-thm}.

In this paper we prove only a global result, assuming the map $T$ is defined on all of $\mathbb{S}^n$.
However we expect that the main result extends to a local version, where one assumes only that
$T$ is defined on a simply connected open set inside $\mathbb{S}^n$.

\subsection{Notation}
Given $A \subset \mathbb{S}^n$, we will denote $(A)$ to be the smallest dimension sphere in $\mathbb{S}^n$ that contains $A$. If $A$ is some finite set, e.g., $A = \{x_1,...,x_n\}$, we will simply write $(x_1x_2...x_n)$ to mean $(\{x_1,x_2,...,x_n\})$. Similarly, if $A = B\cup\{x_1,x_2,...,x_n\})$, we will simply use $(B,x_1,x_2,...,x_n)$ to mean $(B\cup\{x_1,x_2,...,x_n\})$.

To distinguish the domain and range space, we will use lower case roman letter (e.g., $x$) to denote a point in the domain space and lower case roman letter with an apostrophe symbol (e.g., $x'$) to denote a point in the range space.

We will also use $S_k$ to denote $k$-sphere in $\mathbb{S}^n$ and if $T$ is a map on $\mathbb{S}^n$, we will use the notation $S_k':=(T(S_k))$, i.e., the smallest dimension sphere containing $T(S_k)$.

Throughout this paper, we will identify $\mathbb{S}^n$ with $\mathbb{R}^n_\infty:=\mathbb{R}^n\cup\{\infty\}$, and a $k$-sphere with either Euclidean 
$k$-sphere in $\mathbb{R}^n$ or a k-dimensional affine space in $\mathbb{R}^n$ together with the point $\infty$. We will also often use $\R^k_\infty \subset \R^n_\infty$ to mean the subspace $\{(x_1,...,x_k,0,...,0):x_1,...,x_k\in \R\}\cup \{\infty\}$ in $\R^n_\infty$.

%%%%%%%%%%%%%%%%%%%%%%%%%%%%%%%%%%%%%%%%%%%%%%%%%%%%%%%%
% SECT 2
%%%%%%%%%%%%%%%%%%%%%%%%%%%%%%%%%%%%%%%%%%%%%%%%%%%%%%%%
\section{Two dimensional case }\label{sec2}

The proof will use some results from the two-dimensional case which were obtained in previous papers,
\cite{GW79} and \cite{GL13a}.
They state that if $T$ is weakly circle-preserving, 
and satisfies  some conditions on the image of the map $T$, 
then $T$ will automatically become continuous and bijective, in fact, $T$ will be a \m transformation.

In 1979 the first author with Webb \cite[Theorem 1]{GW79} proved a  "six-point theorem" for locally defined maps.
The following 
theorem  is the special case $U= {\mathbb S}^2$ of that theorem. 
Notice that we use the term \m transformation for inversive transformation.
%%%%%%%%
% theorem 2.4
%%%%%%%
\begin{theorem} \label{six-thm}   {\em (``Six-point theorem")}
Let  $T$ be a weakly circle-preserving map from  ${\mathbb S}^2$ into  ${\mathbb S}^n$ with $n\geq 2$
which satisfies the following conditions. 
\begin{enumerate}
\item
Every circle in the codomain ${\mathbb S}^n$ does not contain
at least two points in the image $T({\mathbb S}^2)$, i.e.  
$T({\mathbb S}^2)$ is in circular general position in $\mathbb{S}^n$. 

\item
The image $T({\mathbb S}^2 )$ contains at least six distinct points. 
\end{enumerate}
Then $T({\mathbb S}^2 )$ is a 2-sphere and $T$ is a \m transformation.
\end{theorem}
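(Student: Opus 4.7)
The plan is to prove the theorem in two stages: first, establish that $T(\mathbb{S}^2)$ is contained in a single $2$-sphere $\Sigma' \subset \mathbb{S}^n$, reducing to the case of a map into a $2$-sphere; second, combine the weakly circle-preserving hypothesis with circular general position and the six-point condition to conclude that $T$ is a \m transformation onto $\Sigma'$.

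For the first stage, I would locate $\Sigma'$ as the $2$-sphere spanned by four image points in general position. Such a quadruple exists, for otherwise $T(\mathbb{S}^2)$ would lie on a single circle, violating circular general position. Let $p_1,\ldots,p_4 \in \mathbb{S}^2$ be preimages of such a quadruple $p_1',\ldots,p_4'$, and set $\Sigma' := (p_1'p_2'p_3'p_4')$. Any circle in $\mathbb{S}^2$ through three of the $p_i$ maps into the circle through the corresponding three images; since that image circle passes through three points of $\Sigma'$, it is contained in $\Sigma'$. For an arbitrary $y \in \mathbb{S}^2$, the circle $C := (p_1 p_2 y)$ in $\mathbb{S}^2$ meets each of $(p_1 p_3 p_4)$ and $(p_2 p_3 p_4)$ in one further point, call them $z$ and $w$, provided the initial quadruple is chosen to avoid the exceptional tangent configurations. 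Then $T(p_1), T(p_2), T(z), T(w)$ all lie on the circle $C'$ that contains $T(C)$, and all four lie in $\Sigma'$. Any three distinct points among them determine $C'$, forcing $C' \subset \Sigma'$, and hence $T(y) \in \Sigma'$. Since $y$ is arbitrary, $T(\mathbb{S}^2) \subset \Sigma'$.

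For the second stage, I identify $\Sigma'$ with $\R^2_\infty$, so that $T$ becomes a weakly circle-preserving map $\mathbb{S}^2 \to \R^2_\infty$ with image in circular general position and at least six distinct image points. Using the six-point hypothesis I would select preimages of four image points in general position in $\Sigma'$ and post-compose $T$ by a \m transformation of the target to normalize three of the image points to $0, 1, \infty$. The real axis, as the circle through these three fixed values, is then preserved setwise; the fourth normalized image point, placed off the real axis, removes the orientation ambiguity between a \m transformation and its anti-holomorphic cousin. For any $y \in \mathbb{S}^2$ I would locate $T(y)$ by intersecting the image circles of several domain circles through $y$ and the normalized points, where circular general position guarantees that each such image circle is determined by three distinct known image points. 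The upshot is that, after normalization, $T$ must coincide with the identity, so the original $T$ is a \m transformation; surjectivity onto $\Sigma'$ then follows.

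The most delicate step will be the second stage. Because $T$ is not assumed continuous, injective, or surjective, one must rule out the possibility that $T$ collapses some circle to a point or proper arc and must keep careful track of the generic choices needed to avoid tangencies, coincident images, and other exceptional configurations. The six-point assumption supplies exactly enough distinct image points to propagate the rigidity of circle-preservation from the normalizing quadruple to every other point of $\mathbb{S}^2$; managing this propagation cleanly, rather than any single clever identity, is the principal technical burden.
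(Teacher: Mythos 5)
A preliminary remark: the paper never proves Theorem \ref{six-thm}; it is imported verbatim as the special case $U=\mathbb{S}^2$ of \cite[Theorem 1]{GW79}, so there is no internal argument to measure yours against. Judged on its own, your proposal has the natural overall shape (trap the image in a $2$-sphere $\Sigma'$, then prove rigidity of a weakly circle-preserving map into $\Sigma'$), but both stages contain genuine gaps.

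In Stage 1, to force the circle $C'$ containing $T(C)$ into $\Sigma'$ you need \emph{three distinct} points of $C'$ already known to lie in $\Sigma'$. You have $p_1'\neq p_2'$, but since $T$ is not injective nothing prevents $T(z)$ and $T(w)$ from both landing in $\{p_1',p_2'\}$, in which case only two points of $C'$ are pinned to $\Sigma'$ and the circle is not determined. Separately, the second intersection points $z,w$ may fail to exist when $(p_1p_2y)$ is tangent to $(p_1p_3p_4)$ at $p_1$ (or to $(p_2p_3p_4)$ at $p_2$); this is a condition on $y$, not on the quadruple, so it cannot be evaded ``by choosing the initial quadruple'' --- for every fixed $p_1,\dots,p_4$ there are exceptional $y$ requiring a separate argument. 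Stage 2 is where the real content of the theorem lives, and as written it is a plan rather than a proof: you do not establish that $T$ is injective, that no circle is collapsed to fewer than three image points, that the ``several domain circles through $y$'' can always be chosen so that each carries three points with \emph{distinct} known images, or that the resulting image circles meet in exactly one candidate for $T(y)$ (two distinct circles generically meet in two points, so a tie must be broken). These are exactly the places where the six-point count and circular general position have to be spent, and the proposal does not spend them; the conclusions ``$T$ must coincide with the identity'' and ``surjectivity then follows'' are asserted, not derived. You correctly identify this as the delicate step, but identifying it is not the same as closing it.
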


A result that we proved in  \cite{GL13a} allows us to strengthen the result above, as follows. 
%%%%%%%%
% theorem 2.5
%%%%%%%
\begin{theorem}\label{5point}
 {\em (``Five-point theorem")}
Let  $T$ be a weakly circle-preserving map from  ${\mathbb S}^2$ into  ${\mathbb S}^n$ with $n\geq 2$
which satisfies the following conditions. 
\begin{enumerate}
\item
Every circle in the codomain ${\mathbb S}^n$ does not contain
at least two points in the image  $T({\mathbb S}^2)$, i.e.
$T({\mathbb S}^2)$ is in circular general position in $\mathbb{S}^n$. 
\item
The image $T({\mathbb S}^2 )$ contains five or more   distinct points. 
\end{enumerate}
Then $T({\mathbb S}^2 )$ is a 2-sphere and $T$ is a \m transformation.
\end{theorem}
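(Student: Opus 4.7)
The strategy is to reduce the proof to the Six-point Theorem (Theorem \ref{six-thm}) by ruling out the borderline case in which the image has exactly five points. If $|T(\mathbb{S}^2)| \geq 6$, Theorem \ref{six-thm} applies verbatim and yields the conclusion, so only the case $|T(\mathbb{S}^2)| = 5$ requires a separate argument.

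Suppose for contradiction that $T(\mathbb{S}^2) = \{p_1', \ldots, p_5'\}$. Circular general position forces every circle in $\mathbb{S}^n$ to miss at least two of these points, so in particular no four of them are concyclic. Combined with weak circle preservation this gives $|T(C)| \leq 3$ for every circle $C \subseteq \mathbb{S}^2$. Thus the level-set partition $\mathbb{S}^2 = \bigsqcup_{i=1}^5 T^{-1}(p_i')$ has the property that every circle in $\mathbb{S}^2$ meets at most three pieces.

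The next step is to invoke the rigidity established in our earlier paper \cite{GL13a}. Fix a point $x_0 \in T^{-1}(p_1')$ and examine the two-parameter family of circles in $\mathbb{S}^2$ through $x_0$; each such circle sends $x_0$ to $p_1'$ and its image contains at most two of the other $p_i'$. Applying the circle-by-circle rigidity of \cite{GL13a} to control how ``colors'' interleave along circles through $x_0$, I would produce a single circle $C^{\ast}$ through $x_0$ whose image meets four of the $p_i'$, contradicting $|T(C^{\ast})| \leq 3$. Hence $|T(\mathbb{S}^2)| \geq 6$, and Theorem \ref{six-thm} finishes the proof.

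The main obstacle is the construction of the circle $C^{\ast}$. The abstract bound $|T(C)| \leq 3$ combined with a five-set partition of $\mathbb{S}^2$ does not, on its own, force a concyclic contradiction — the five level sets could a priori be arranged very wildly. The role of \cite{GL13a} is to replace the abstract partition by a geometrically controlled one (for instance, unions of circular arcs together with a negligible exceptional set) so that one can select a circle through $x_0$ whose intersections with the other four level sets are genuinely forced. Without this rigidity the Six-point argument of \cite{GW79} gains no traction with only five colors available, which is why the passage from six to five points requires the input of \cite{GL13a} and not merely a pigeonhole refinement.
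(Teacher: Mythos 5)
Your overall reduction is sound as far as it goes: the Six-point Theorem disposes of the case $|T(\mathbb{S}^2)|\geq 6$, and circular general position of a five-point image forces no four of the $p_i'$ to be concyclic, hence $|T(C)|\leq 3$ for every circle $C$, so the remaining case is exactly the question of whether $\mathbb{S}^2$ admits a partition into five nonempty level sets with every circle meeting at most three of them. This matches the intent of the paper, whose entire proof of the Five-point Theorem is the single line ``This was proved in [GL13a, Theorem 4.4]''; the statement is imported wholesale from the companion paper rather than proved here. (A small point you could have added: condition (2) is redundant given condition (1), since a four-point image would have a circle through three of the points whose complement contains at most one image point.)

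The genuine gap is that the one step carrying all the content --- producing the circle $C^{\ast}$ through $x_0$ that meets four of the five level sets, equivalently ruling out a five-class partition of $\mathbb{S}^2$ in which every circle is at most trichromatic --- is not carried out. You acknowledge this yourself, but the way you propose to fill it is speculative: you assert that \cite{GL13a} supplies a structural description of the level sets (``unions of circular arcs together with a negligible exceptional set'') from which $C^{\ast}$ can be selected, yet no such description is derived from the hypotheses, and nothing in the present paper licenses it. A priori the five preimages $T^{-1}(p_i')$ are completely arbitrary sets, and the statement that no such trichromatic five-coloring exists is precisely the nontrivial combinatorial theorem of \cite{GL13a}; invoking unspecified ``circle-by-circle rigidity'' to conjure $C^{\ast}$ is naming the conclusion, not proving it. So your proposal is a correct framing of what must be shown, consistent with the paper's own (purely citational) treatment, but it does not constitute a proof: the core coloring argument is missing, and if you intend to prove the theorem rather than cite it, that argument is the whole task.
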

%%***********************************
%
%Remark 2.6
%
%%***********************************
\begin{rmk}
We have stated this theorem to be in parallel with Theorem \ref{six-thm}. 
In fact   condition (2) in Theorem \ref{5point}  already follows from condition (1), 
\end{rmk}

\begin{proof}
This was proved in \cite[Theorem 4.4]{GL13a}.
\end{proof}

%%%%
%%%

%%%%%%%%%%%%%%%%%%%%%%%%%%%%%%%%%%%%%%%%%%%%%%%%%%%%%%%%
% SECT 3
%%%%%%%%%%%%%%%%%%%%%%%%%%%%%%%%%%%%%%%%%%%%%%%%%%%%%%%%
%%%%%
\section{Proof of Main Theorem \ref{thm-main}.}

Before proving this theorem, we need to prove several lemmas. The first two lemmas are geometric facts about intersecting spheres, which will be used a lot in proving the other lemmas.

\begin{lem}\label{dim-sphere-intersect}
If $S_k$ and $S_m$ are $k$-sphere and m-sphere in $\mathbb{S}^n$, assume $S_k\cap S_m$ contains at least two points, then dimension of the sphere $S_k\cap S_m$ can be $\max\{0,k+m-n\}$ to $\min\{k,m\}$.
\end{lem}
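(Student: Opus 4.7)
\emph{Proof plan.} The plan is to represent each sphere in $\mathbb{S}^n$ as the intersection of $\mathbb{S}^n$ with an affine subspace of $\R^{n+1}$, and then reduce the statement to the standard dimension formula for affine subspaces. Write $S_k=V_1\cap\mathbb{S}^n$ and $S_m=V_2\cap\mathbb{S}^n$, where $V_i\subset\R^{n+1}$ is an affine subspace with $\dim V_1=k+1$ and $\dim V_2=m+1$. Then
\[
S_k\cap S_m=(V_1\cap V_2)\cap\mathbb{S}^n.
\]
Because $S_k\cap S_m$ contains at least two points, $V_1\cap V_2$ is non-empty and has dimension at least $1$; say $\dim(V_1\cap V_2)=d+1$. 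The intersection of a $(d+1)$-dimensional affine subspace with $\mathbb{S}^n$ (when it contains two points, hence is not tangent) is precisely a $d$-sphere, so $\dim(S_k\cap S_m)=d$.

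With this translation, the two bounds are immediate. Since $V_1\cap V_2\subset V_i$, we get $d+1\leq\min\{k+1,m+1\}$, giving the upper bound $d\leq\min\{k,m\}$. For the lower bound, I would invoke the standard affine-dimension inequality, valid whenever $V_1\cap V_2\neq\emptyset$,
\[
\dim(V_1\cap V_2)\ \geq\ \dim V_1+\dim V_2-(n+1)\ =\ k+m-n+1,
\]
which yields $d\geq k+m-n$. Combined with the fact $d\geq 0$ (forced by the two-point assumption), this gives $d\geq\max\{0,k+m-n\}$.

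Finally, to show every integer in the interval is actually attained, I would exhibit explicit examples in $\R^n_\infty$. Without loss of generality assume $m\leq k$. Fix $S_k=\{x\in\R^n:x_{k+1}=\cdots=x_n=0\}\cup\{\infty\}$. For any $d$ with $\max\{0,k+m-n\}\leq d\leq m$, let $S_m$ be the affine $m$-plane spanned by the first $d$ coordinate axes together with $m-d$ of the coordinate axes in positions $k+1,\dots,n$ (plus $\infty$). The constraint $m-d\leq n-k$ is exactly $d\geq k+m-n$, and by construction $S_k\cap S_m$ is the $d$-plane spanned by the first $d$ coordinates together with $\infty$, i.e.\ a $d$-sphere.

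I do not expect any real obstacle here: once one passes to affine subspaces in $\R^{n+1}$, the lemma is the dimension formula for affine intersections, and the only minor point requiring attention is the shift by $1$ between the affine dimension of the intersecting plane and the sphere dimension of its trace on $\mathbb{S}^n$, together with excluding the degenerate tangential case (which is ruled out by the two-point hypothesis).
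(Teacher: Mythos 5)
Your proof is correct, and it reaches the same underlying fact --- the dimension formula for subspace intersections --- by a slightly different reduction than the paper's. The paper first applies a M\"obius transformation sending two of the common points to $0$ and $\infty$, so that both spheres become \emph{linear} subspaces of $\R^n$ (union $\{\infty\}$) of dimensions $k$ and $m$, and then quotes the linear-algebra dimension theorem in the ambient $\R^n$. You instead avoid any normalization by representing each sphere as the trace on $\mathbb{S}^n$ of an affine subspace of $\R^{n+1}$ (of dimension one higher than the sphere) and applying the affine version of the same inequality, correctly noting that it requires nonempty intersection and correctly tracking the shift by one between affine dimension and sphere dimension. The trade-off is minor: the paper's route needs the transitivity of the M\"obius group on pairs of points and the fact that M\"obius maps preserve sphere dimension, while yours needs the affine-section characterization of spheres in $\mathbb{S}^n$ and the non-tangency observation (which your two-point hypothesis supplies). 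You also go further than the paper in explicitly exhibiting configurations realizing every dimension in the allowed range, which is a reasonable reading of the lemma's phrase ``can be $\max\{0,k+m-n\}$ to $\min\{k,m\}$'' and is left implicit in the paper's two-line argument.
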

\begin{proof}
To see this, we simply use a \m transformation to map two intersection points to $0$ and $\infty$, then $S_k$ and $S_m$ are mapped to two vector subspace in $\mathbb{R}^n$ union the point $\infty$. We apply basic dimension theorem in linear algebra and get the result.
\end{proof}

\begin{lem}\label{circle-in-S2}
For $n\geq 3$, $1\leq k\leq n-2$, let $S_k$ be a $k$-sphere in $\mathbb{S}^n$ and $x_1,x_2$ be two points in $\mathbb{S}^n$. Assume that $\dim((S_k,x_1,x_2)) = k+2$, then any circle $C$ through $x_1,x_2$ intersects $S_k$ in at most one point.
\end{lem}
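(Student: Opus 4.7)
The plan is to argue by contradiction: suppose a circle $C$ through $x_1, x_2$ meets $S_k$ in at least two distinct points $p, q$. I want to deduce that $\dim((S_k, x_1, x_2)) \le k+1$, contradicting the hypothesis.

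The key technical move is the standard Möbius normalization already used in the proof of Lemma \ref{dim-sphere-intersect}. First I apply a \m transformation that sends $p \mapsto 0$ and $q \mapsto \infty$. Since \m transformations preserve the lattice of spheres and their dimensions, it suffices to argue in this normalized picture. After the transformation, $S_k$ is a sphere containing both $0$ and $\infty$, so it becomes a $k$-dimensional linear subspace $V \subset \R^n$ (together with $\{\infty\}$); similarly, the circle $C$ contains $0$ and $\infty$ and so becomes a line $L$ through the origin (together with $\{\infty\}$).

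Now $x_1, x_2$ are points of $C$, so in the normalized picture they are scalar multiples of a direction vector $v$ spanning $L$. I then split on whether $L \subset V$. If $L \subset V$, then $x_1, x_2 \in S_k$, so $(S_k, x_1, x_2) = S_k$ has dimension $k$, contradicting the assumption $\dim((S_k, x_1, x_2)) = k+2$. If $L \not\subset V$, then $v \notin V$, so the smallest linear subspace containing $V \cup \{x_1, x_2\}$ is $V + \R v$, of dimension $k+1$. Together with $\{\infty\}$ this is a $(k+1)$-sphere in $\R^n_\infty$ containing $S_k \cup \{x_1, x_2\}$. Hence $\dim((S_k, x_1, x_2)) \le k+1$, again contradicting the hypothesis.

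The argument is essentially a one-step reduction followed by a linear-algebra count, so I do not expect a serious obstacle; the only point to be careful about is the case $C \subset S_k$, which the dimension formula of Lemma \ref{dim-sphere-intersect} (with $k+m-n = 1+k-n \le -1$ when $k \le n-2$) allows a priori, but which is handled cleanly by the first case above. The constraint $k \le n-2$ is used only to ensure there is room for the dimension $k+2$ to make sense, i.e. $k+2 \le n$.
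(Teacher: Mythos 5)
Your proof is correct, and it takes a different route from the paper's. The paper argues synthetically: it sets $S_{k+1,1}:=(S_k,x_1)$, notes that if $C$ met $S_k$ in two points $y_1,y_2$ then $C$ and $S_{k+1,1}$ would share the three distinct points $x_1,y_1,y_2$, so $C\subset S_{k+1,1}$ (a circle through three points of a sphere lies in that sphere), whence $x_2\in S_{k+1,1}$ and $\dim((S_k,x_1,x_2))\leq k+1$ --- a contradiction. You instead normalize the two alleged intersection points to $0$ and $\infty$ (the same device the paper uses only for Lemma \ref{dim-sphere-intersect}) and run a linear-algebra dimension count. Both are valid; the paper's version is shorter and needs no case split, while yours makes the dimension bookkeeping explicit and isolates the degenerate case $C\subset S_k$ cleanly. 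One small point you should make explicit: the hypothesis $\dim((S_k,x_1,x_2))=k+2$ forces $x_1,x_2\notin S_k$, and this is what guarantees that after your normalization neither $x_1$ nor $x_2$ is the point $\infty$ (i.e., neither equals $q$), so that they really are finite points of the line $L$ and hence scalar multiples of $v$; it also ensures at least one of them is a \emph{nonzero} multiple of $v$, though for your contradiction the containment $(S_k,x_1,x_2)\subset (V+\R v)\cup\{\infty\}$ and the resulting bound $\dim\leq k+1$ is all that is needed.
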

\begin{proof}
Let $S_{k+1,1}:=(S_k,x_1)$, and suppose for contradiction that there is a circle $C$ through $x_1,x_2$ that intersect $S_k$ at at least two points. Let $y_1,y_2$ be two of the intersection points. Notice that $\{x_1,y_1,y_2\} \subset C\cap S_{k+1,1}$, so $C \subset S_{k+1,1}$. So we have $x_2\in S_{k+1,1}$. This is a contradiction to $\dim((S_k,x_1,x_2)) = k+2$.
\end{proof}

The following lemma shows that if $T$ is a \m transformation on a small dimensional sphere, $T$ will be \m transformation in a larger dimensional sphere provided some condition on the image of this larger dimensional sphere. This lemma gives us a tool to build up a chain of spheres with increasing dimension and $T$ is a \m transformation on each of them.

\begin{lem}\label{2point-lemma}
Let $T:\mathbb{S}^{n}\longrightarrow \mathbb{S}^{n}$ be a weakly circle preserving map, and $S_k^0\subset S_{k+1}^0$ are $k$-sphere and $(k+1)$-sphere in $\mathbb{S}^n$ respectively. Suppose that $T|_{S_k}$ is a \m transformation and $|T(S_{k+1}^0)-T(S_k^0)|\geq 2$, then $T|_{S_{k+1}^0}$ is a \m transformation.
\end{lem}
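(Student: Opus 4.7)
The plan is first to show that $T(S_{k+1}^0)$ lies in a single $(k+1)$-sphere $S_{k+1}'$, and then to apply the five-point theorem (Theorem~\ref{5point}) to suitable $2$-spheres in $S_{k+1}^0$ to conclude that $T|_{S_{k+1}^0}$ is a \m transformation. Let $S_k' := T(S_k^0)$, a $k$-sphere since $T|_{S_k^0}$ is \m; by hypothesis pick $a \in S_{k+1}^0$ with $T(a) \notin S_k'$, and set $S_{k+1}' := (S_k', T(a))$. For any $x \in S_{k+1}^0 \setminus (S_k^0 \cup \{a\})$, I first construct a circle $C \subset S_{k+1}^0$ through $a, x$ meeting $S_k^0$ in two points $y_1, y_2$: identifying $S_{k+1}^0 = \R^{k+1}_\infty$ and $S_k^0 = \R^k_\infty$, pick a $2$-plane $\Pi$ through $a, x$ not parallel to $\R^k$ (so $\ell := \Pi \cap \R^k$ is a line), then choose a circle in $\Pi$ through $a, x$ intersecting $\ell$ twice. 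Weak circle-preservation gives $T(C) \subset C'$ for a circle $C'$ through the three distinct points $T(a), T(y_1), T(y_2)$, all in $S_{k+1}'$, so $C' \subset S_{k+1}'$ and $T(x) \in S_{k+1}'$.

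Next, by the hypothesis $|T(S_{k+1}^0) - T(S_k^0)| \geq 2$ pick $b \in S_{k+1}^0$ with $T(b) \notin S_k'$ and $T(b) \neq T(a)$. For generic $y_1, y_2 \in S_k^0$, the set $S_2^* := (a, b, y_1, y_2)$ is a genuine $2$-sphere, and Lemma~\ref{dim-sphere-intersect} applied in the ambient $S_{k+1}^0$ (whose dimension range $[\max(0,2+k-(k+1)), \min(2,k)] = [1, \min(2,k)]$ together with $S_2^* \not\subset S_k^0$ forces dimension exactly $1$) shows that $C_0^* := S_2^* \cap S_k^0$ is a circle. I apply Theorem~\ref{5point} to $T|_{S_2^*} : S_2^* \to \mathbb{S}^n$: the image contains the circle $T(C_0^*) \subset S_k'$ together with the two distinct points $T(a), T(b)$ off $S_k'$; for any circle $C' \subset \mathbb{S}^n$ either $C' = T(C_0^*)$ (and $T(a), T(b) \in T(S_2^*) \setminus C'$) or $C' \neq T(C_0^*)$ (and $T(C_0^*) \setminus C'$ is infinite). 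Hence $T(S_2^*)$ is in circular general position, and Theorem~\ref{5point} yields that $T|_{S_2^*}$ is a \m transformation.

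Finally, I combine the two \m maps $T|_{S_k^0}$ and $T|_{S_2^*}$ (which agree on $C_0^*$) into a single \m transformation $\tilde T : S_{k+1}^0 \to S_{k+1}'$: since $S_k^0 \cup S_2^*$ spans $S_{k+1}^0$ (because $(S_k^0, a) = S_{k+1}^0$) and contains more than enough general-position data, $\tilde T$ is uniquely determined and agrees with $T$ on $S_k^0 \cup S_2^*$. To conclude $T|_{S_{k+1}^0} = \tilde T$, for any $z \in S_{k+1}^0 \setminus S_k^0$ apply the same five-point argument to $S_2(z) := (a, b, z, y)$ for generic $y \in S_k^0$, obtaining that $T|_{S_2(z)}$ is \m; then $T|_{S_2(z)}$ and $\tilde T|_{S_2(z)}$ are two \m maps of $S_2(z)$ agreeing on the infinite set $(S_2(z) \cap S_k^0) \cup \{a, b\}$, hence equal, giving $T(z) = \tilde T(z)$. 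The main obstacle is verifying the genericity claims — that $y_1, y_2$ and $y$ can be chosen in $S_k^0$ so that $S_2^*$ and $S_2(z)$ are non-degenerate $2$-spheres meeting $S_k^0$ in a circle rather than tangentially — which is handled by showing the degenerate loci are lower-dimensional using Lemma~\ref{dim-sphere-intersect}, together with the uniqueness of \m extensions from the combined data on $S_k^0 \cup S_2^*$.
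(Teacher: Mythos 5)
Your opening two steps are sound and mirror the paper: showing $T(S_{k+1}^0)\subset S_{k+1}'$ via circles through $a,x$ that meet $S_k^0$ twice, and applying the five-point theorem to a $2$-sphere whose image contains a whole circle of $S_k'$ plus two points off $S_k'$, are both essentially the arguments in the paper's proof. The problem is the gluing step, which is where the real content of the lemma lives and where your proposal has a genuine gap. You assert that because $T|_{S_k^0}$ and $T|_{S_2^*}$ are each \m and agree on the circle $C_0^*$, they ``combine'' into a single \m transformation $\tilde T$ of $S_{k+1}^0$. Existence of such a $\tilde T$ is not justified and does not follow from the stated data: a \m map $S_k^0\to S_k'$ admits exactly \emph{two} \m extensions $S_{k+1}^0\to S_{k+1}'$ (differing by reflection in $S_k'$), so the orbit of $a$ under admissible extensions is just two points, while $T(a)$ is a priori an arbitrary point of $S_{k+1}'\setminus S_k'$. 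Likewise, a \m map of $S_2^*$ agreeing with $T|_{S_k^0}$ only on the single circle $C_0^*$ sits in a positive-dimensional family (compose with \m maps fixing $C_0^*$ pointwise), almost none of whose members are restrictions of an extension of $T|_{S_k^0}$. Since your final step (comparing $T|_{S_2(z)}$ with $\tilde T|_{S_2(z)}$) presupposes $\tilde T$, the argument is circular: proving that the pointwise data on $S_k^0\cup\{a\}$ is ``\m-compatible'' is precisely the assertion of the lemma.

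The paper closes this gap by an entirely different mechanism. It first proves $T$ is \emph{injective} on $S_{k+1}^0$ (running your five-point argument over $2$-spheres through two carefully chosen points on \emph{opposite} sides of $S_k^0$ in $S_{k+1}^0$, which guarantees the intersection with $S_k^0$ is a genuine circle rather than a tangency --- your ``generic $y$'' handwave glosses over exactly this), then shows $T$ restricted to \emph{every} $k$-sphere inside $S_{k+1}^0$ is \m (by an induction on $k$ starting at $k=2$), and finally invokes Beardon--Minda's Corollary 6.3: a map whose restriction to every codimension-one sphere is \m is itself \m. That external theorem is the gluing device your proposal is missing; without it, or some substitute argument showing that the various \m restrictions cohere into one global \m map, the proof does not go through.
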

\begin{proof}
We will first show $T$ is injective on $S_{k+1}^0$. Let $x_1', x_2'\in T(S_{k+1})-T(S_k)$, and fix two points $x_i\in T^{-1}(x_i')$.

\begin{cl1}
There is a 2-sphere $S_2$ through $x_1$, $x_2$ and intersecting $S_k^0$ in a circle.
\end{cl1}
\begin{proof} (of Claim 1)

Since $S_k^0$ has codimension 1 in $S_{k+1}^0$, $S_k^0$ divides $S_{k+1}^0$ into two components.

If $x_1$ and $x_2$ are on the opposite side of $S_k^0$, then any 2-sphere through $x_1, x_2$ will intersect $S_k^0$ in a circle.

Otherwise, choose $x_3$ on the opposite side of $x_1$ and $x_2$, then any 2-sphere through $x_1,x_2,x_3$ will intersect $S_k^0$ in a circle.
\end{proof}

Notice that the image of $S_2$ contains $x_1',x_2'$ and a whole circle, so $T(S_2)$ is in circular general position. By the five-point theorem (Theorem \ref{5point}), $T$ is a \m transformation on $S_2$. Also notice that $S_2$ must intersect both components in $S_{k+1}^0$ divided by $S_k^0$, so we let $x_3 \in S_2-S_k^0$ on the opposite side of $x_1$, and $x_3'$ be its image. Since $T$ is a \m transformation on $S_2$, we have $x_3'\notin T(S_k^0)$ and $x_3'\neq x_1'$. Now given any two points $y_1,y_2$, we form a 2-sphere through $x_1,x_3,y_1,y_2$, then this 2-sphere must intersect $S_k^0$ in a circle as it contains $x_1$ and $x_2$. Therefore, the image of this 2-sphere consists of $x_1',x_3'$ and a whole circle, which means the image is in circular general position. Therefore, $T$ is a \m transformation on this 2-sphere by five-point theorem (Theorem \ref{5point}), in particular, $T(y_1)\neq T(y_2)$. This shows that $T$ is injective on $S_{k+1}^0$.\\

We will then prove that $T(S_{k+1}^0)$ lies in some $(k+1)$-sphere. Suppose not, let $x_1,x_2$ be two points in $S_{k+1}^0$ such that $x_2'\notin ((S_k^0)',x_1')$. Let $S_2$ be a 2-sphere through $x_1, x_2$ and intersecting $S_k^0$ in a circle $C$, then $T(S_2)$ is in circular general position, so $T$ is a \m transformation on $S_2$. But $S_2'\cap ((S_k^0)',x_1')$ contains $x_1'$ and $C'$, so $S_2'\subset ((S_k^0)',x_1')$, then $x_2'\in S_2'\subset ((S_k^0)',x_1')$. But this is a contradiction to the assumption $x_2'\notin ((S_k^0)',x_1')$. Therefore $T(S_{k+1})$ lies in some $(k+1)$-sphere.\\

We will now prove this lemma by induction.

The base case $k=2$: Under a \m transformation, we may assume that $S_3^0=\R^3_\infty\subset \R^n_\infty$, and $S_2^0=\R^2_\infty\subset \R^n_\infty$. Given any 2-sphere $S$ which is 2-dimensional plane through origin with $\infty$ in $S_3^0$ other than $S_2^0$, then $S\cap S_2^0$ is a circle $C$, so the image $T(S)$ contains a whole circle $T(C) = (T(C)) = C'$. Moreover, since $T$ is injective on $S_3^0$, $T(S)$ also consists of 2 distinct points off $C'$, so $T(S)$ is in circular general position. By the five-point theorem (Theorem \ref{5point}), $T$ is a \m transformation on $S$. Now given arbitrary 2-sphere $S_2$ in $S_3^0$, it must intersect some plane through origin in a circle. By the same argument, $T$ is a \m transformation on $S_2$. By the corollary 6.3 in Beardon and Minda \cite{BM01} (namely, let $T:\mathbb{S}^n\longrightarrow \mathbb{S}^n$, if $T$ restrict to any $(n-1)$-sphere is a \m transformation, then $T$ is a \m transformation), we conclude that $T$ is a \m transformation on $S_3^0$.

The induction step: Assume that the lemma holds for $k$, we prove the case for $k+1$. Again, under a \m transformation, we may assume that $S_{k+2}^0=\R^{k+2}_\infty\subset \R^n_\infty$, and $S_{k+1}^0=\R^{k+1}_\infty\subset \R^n_\infty$. Given any $(k+1)$-sphere $S$ which is (k+1)-dimensional plane through origin with $\infty$ in $S_{k+2}^0$ other than $S_{k+1}^0$, $S\cap S_{k+1}^0$ is a $k$-sphere $\tilde{S}$. Notice $T|_{\tilde{S}}$ is a \m transformation as $T$ is a \m transformation on $S_{k+1}^0$. Moreover, since $T$ is injective on $S_{k+2}^0$, $T(S)$ also consists of 2 distinct points off $T(\tilde{S})$. By induction hypothesis, $T$ is a \m transformation on $S$. Now given arbitrary $(k+1)$-sphere $S_{k+1}$ in $S_{k+2}^0$, it must intersect some hyperplanes in $S_{k+2}^0$ through origin in a $k$-sphere. By the same argument, $T$ is a \m transformation on $S_{k+1}$. By the corollary 6.3 in Beardon and Minda \cite{BM01}, we conclude that $T$ is a \m transformation on $S_{k+2}^0$.

By induction, we conclude that the lemma holds for every $k\geq 2$.
\end{proof}

The following lemma shows that if the hypothesis of the Lemma \ref{2point-lemma} fails for every possible $(k+1)$-spheres, there will be some strict restriction on all higher dimensional spheres.

\begin{lem}\label{1point-lemma}
Let $T:\mathbb{S}^n\longrightarrow \mathbb{S}^n$ be a weakly circle-preserving map and $S_k^0$ be a fixed $k$-sphere in $\mathbb{S}^n$ such that $T|_{S_k^0}$ is a \m transformation. Assume that for any $(k+1)$-sphere $S_{k+1}$ containing $S_k^0$, $|T(S_{k+1})-T(S_k^0)|\leq 1$, then the following holds:

For any $(k+m)$-sphere $S_{k+m}$ ($1\leq m\leq n-k$) containing $S_k^0$ such that $\dim(S_{k+m}') \geq k+m$ (recall here $S_{k+m}'$ is the smallest dimension sphere containing $T(S_{k+m})$), we have
\begin{equation}
|T(S_{k+m})-T(S_k^0)| = m
\end{equation}
and
\begin{equation}
|T(S_{k+m}-S_k^0)|= m
\end{equation}
\end{lem}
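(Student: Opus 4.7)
The plan is to induct on $m \geq 1$. The base case uses a direct circle argument; the inductive step first establishes non-degeneracy of every $(k+1)$-subsphere and then applies the inductive hypothesis combined with Theorem \ref{5point}.

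Base case $m=1$. The dimension hypothesis $\dim(S_{k+1}') \geq k+1$ rules out $T(S_{k+1}) \subset T(S_k^0)$ (which would force $\dim(S_{k+1}') \leq k$), so combined with the hypothesis one has $|T(S_{k+1}) - T(S_k^0)| = 1$; write the extra as $x'$ and pick $z_0 \in S_{k+1}$ with $T(z_0) = x'$. To prove the second equation, suppose some $y \in S_{k+1} - S_k^0$ has $T(y) \in T(S_k^0)$, and let $w \in S_k^0$ be the unique point with $T(w) = T(y)$. For generic $z \in S_k^0 \setminus \{w\}$, the unique circle $C$ through $y, z_0, z$ lies in $S_{k+1}$ and meets $S_k^0$ at $z$ and a second point $z' \neq w$. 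Then the three points $T(z), T(z'), T(y)$ of $T(S_k^0)$ are distinct, so by Lemma \ref{dim-sphere-intersect} the circle $C' \supset T(C)$ is forced into $T(S_k^0)$, contradicting $x' = T(z_0) \in C'$.

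Inductive step. Let $\tilde S$ be a $(k+m+1)$-sphere containing $S_k^0$ with $\dim(\tilde S') \geq k+m+1$. First I would show that no $(k+1)$-sphere $S^{(1)} \subset \tilde S$ through $S_k^0$ is degenerate, i.e., satisfies $T(S^{(1)}) \subset T(S_k^0)$. Otherwise, picking $y_1 \in S^{(1)} - S_k^0$ (for which $T(y_1) \in T(S_k^0)$) and any $y_2 \in \tilde S - S_k^0$, an analogous circle-through-$y_1,y_2$-and-a-generic-point-of-$S_k^0$ argument inside the $(k+2)$-sphere $(S_k^0, y_1, y_2)$ forces $T(y_2) \in T(S_k^0)$; combining over all such $y_2$ gives $T(\tilde S) \subset T(S_k^0)$ and $\dim(\tilde S') \leq k$, contradicting the hypothesis. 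Consequently the base case applies to every $(k+1)$-sphere through $S_k^0$ in $\tilde S$, showing that no point of $\tilde S - S_k^0$ maps into $T(S_k^0)$, which reduces the second equation to the first: $|T(\tilde S - S_k^0)| = |T(\tilde S) - T(S_k^0)|$.

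The lower bound $|T(\tilde S) - T(S_k^0)| \geq m+1$ is immediate. For the upper bound, assume for contradiction there exist $m+2$ distinct extras $p_1', \ldots, p_{m+2}'$ with preimages $y_1, \ldots, y_{m+2}$. The inductive hypothesis applied to each $(k+m)$-subsphere of $\tilde S$ spanned by $S_k^0$ together with $m$ of the $y_i$'s (whose image dimension is $\geq k+m$) gives that this subsphere's extras are exactly the corresponding $m$ points. Consider now the 2-sphere $S_2 = (y_m, y_{m+1}, y_{m+2}) \subset \tilde S$. Its image lies in $T(S_k^0) \cup \{p_1', \ldots, p_{m+2}'\}$, a $k$-sphere plus finitely many points; if $T|_{S_2}$ were a M\"obius transformation then $T(S_2)$ would be a full 2-sphere containing $p_m', p_{m+1}', p_{m+2}' \notin T(S_k^0)$, hence containing uncountably many points outside the $k$-sphere -- impossible by finiteness of the extras. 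So $T|_{S_2}$ is not M\"obius, and by the contrapositive of Theorem \ref{5point} there is a circle $D$ with $|T(S_2) - D| \leq 1$. In particular at least two of $p_m', p_{m+1}', p_{m+2}'$ lie on $D$; combining this constraint with the analogous constraints from 2-spheres spanned by other triples of preimages and with the inductively fixed extras of the various $(k+m)$-subspheres forces all $m+2$ extras to lie in a sphere of dimension at most $k+m$, contradicting $\dim(\tilde S') \geq k+m+1$.

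The main obstacle is the final combinatorial step in the upper bound: coordinating the circular constraints from many 2-spheres through triples of the preimages with the inductively prescribed extras of many $(k+m)$-subspheres to force the extras to concentrate in too small a sphere. The base case and the non-degeneracy propagation are essentially routine applications of Lemma \ref{dim-sphere-intersect} and the injectivity of $T|_{S_k^0}$.
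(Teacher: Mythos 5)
Your base case $m=1$ is essentially the paper's own argument (a circle through the bad point $y$, a preimage of the unique extra, and two points of $S_k^0$ with distinct images forces the image circle into the $k$-sphere $T(S_k^0)$), and it is fine. The inductive step, however, has genuine gaps, and the self-identified ``main obstacle'' is exactly where the proof lives. First, your non-degeneracy propagation fails as described: once $\dim((S_k^0,y_1,y_2))=k+2$, the paper's own Lemma \ref{circle-in-S2} says a circle through $y_1$ and $y_2$ meets $S_k^0$ in \emph{at most one} point, so the image of such a circle contains at most two points of $T(S_k^0)$ (namely $T(y_1)$ and the image of that single intersection point) --- not the three needed to trap the image circle inside $T(S_k^0)$ and conclude $T(y_2)\in T(S_k^0)$. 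Second, your upper bound argument assumes the set of extras is finite (``a $k$-sphere plus finitely many points,'' ``impossible by finiteness of the extras''), but finiteness of $T(\tilde S)-T(S_k^0)$ is precisely what the lemma asserts and has not been established at that point; likewise the claim that each $(k+m)$-subsphere spanned by $S_k^0$ and $m$ of the $y_i$ has image dimension $\geq k+m$ requires a careful selection of which extras to use (the images could be in special position relative to $T(S_k^0)$). Third, the ``combining these constraints forces all extras into a $(k+m)$-sphere'' step is a restatement of the goal, not an argument.

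The paper's route through the inductive step is quite different and worth comparing. For $m\geq 3$ it avoids 2-spheres and the five-point theorem entirely: it selects extras $x_1',\dots,x_{m+1}'$ so that $S_{k+m-1}:=(S_k^0,x_1,\dots,x_{m-1})$ and $S_{k+2}:=(S_k^0,x_m,x_{m+1})$ each satisfy the inductive hypothesis with their extras pinned down exactly; then Lemma \ref{dim-sphere-intersect} forces $\dim(S_{k+m-1}\cap S_{k+2})\geq k+1$ inside $S_{k+m}$, producing a point of $(S_{k+2}-S_k^0)\cap S_{k+m-1}$ whose image must lie simultaneously in $\{x_m',x_{m+1}'\}$ and in $\{x_1',\dots,x_{m-1}'\}$ --- a clean contradiction. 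Crucially, that induction bottoms out at $m=2$, not $m=1$, and the $m=2$ case is where the real geometry happens: a circle through an arbitrary point $y\in S_k^0$ cutting all three spheres $(S_k^0,x_i)$ (showing $T(S_k^0)$ would lie in the circle $(x_1'x_2'x_3')$), and a planar projection/loop argument handling points of $S_{k+2}-S_k^0$ that map back into $T(S_k^0)$. Your proposal has no counterpart to the $m=2$ case, and without it the induction cannot start. To repair the proof you would need to supply these two constructions and replace the final combinatorial step with an intersection-of-spheres argument of the above type.
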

\begin{proof}
Notice that from the hypothesis $\dim(S_{k+m}') \geq k+m$, it is necessary that $|T(S_{k+m})-T(S_k^0)| \geq m$. Hence, to prove (1), it is sufficient to prove $|T(S_{k+m})-T(S_k^0)| \leq m$. The proof of this lemma is by induction from $m = 2$. However, the base cases for $m=1,2$ are treated differently.\\

Case $m = 1$: 

(1): This is immediate from the hypothesis of the lemma. \\

(2): Let $T(S_{k+1})-T(S_k^0) = \{x'\}$ and fix $x\in T^{-1}(x')$. Suppose for contradiction that there is $y\in S_{k+1}-S_k^0$ with $y':=T(y) \in T(S_k^0)$.

\begin{cl2}
There is a circle $C$ through $x,y$ and intersect $S_k^0$ at two points and $y'\notin T(C\cap S_k^0)$.
\end{cl2}
\begin{proof} (of Claim 1)

Since $S_k^0$ has codimension 1 in $S_{k+1}$, $S_k^0$ divides $S_{k+1}$ into two components.

If $x,y$ are in opposite side of $S_k^0$, then any circle containing $x,y$ intersects $S_k^0$ at two points. Let $C_1$ and $C_2$ be two different circles through $x,y$, then $C_1\cap C_2 \cap S_k^0 = \emptyset$ (as three points determines a circle). Since $T$ is injective on $S_k^0$, so $T(C_1\cap S_k^0)\cap T(C_2\cap S_k^0) = \emptyset$. Hence, $y'\notin T(C_i\cap S_k^0)$ for at least one of $C_1, C_2$.

Otherwise, choose two different point $x_1$ and $x_2$ in the opposite side and form $C_i = (xyx_i)$. Exact same argument shows that at least one of the two circles will satisfy the desired property.
\end{proof}

Now let $y_1,y_2$ be the intersection point of the circle $C$ with $S_k^0$, and $y_1',y_2'$ be its image respectively. Since $T$ is weakly circle preserving and $y',y_1',y_2'$ determines a circle (as no two of them are equal), we have $x'$ lies on the circle $(y'y_1'y_2')\subset T(S_k^0)$ which is a contradiction to $x'\notin T(S_k^0)$.\\

Case $m = 2$:

(1): Given $S_{k+2}$ containing $S_k^0$, and suppose for contradiction that $|T(S_{k+2})-T(S_k^0)|\geq 3$. Choose $x_1',x_2',x_3' \in T(S_{k+2})-T(S_k^0)$ and fix $x_1,x_2,x_3$ in its preimage, i.e. $T(x_i) = x_i'$, and let $S_{k+1,i} := (S_k^0,x_i)$ for $i=1,2,3$. Notice that $\dim(S_{k+1,i}) = k+1$ as $x_i \notin S_k^0$, so by the (2) of $m=1$, we have $T(S_{k+1,i}-S_k^0) = \{x_i'\}$. Now given $y\in S_k^0$, and $y'$ be its image under $T$.

\begin{cl2}
There is a circle $C_y$ though $y$ and intersect $S_{k+1,i}-S_k^0$ for all $i = 1,2,3$.
\end{cl2}
\begin{proof} (of Claim 2)

\begin{figure}[h!]
\centering
\includegraphics[width=70mm]{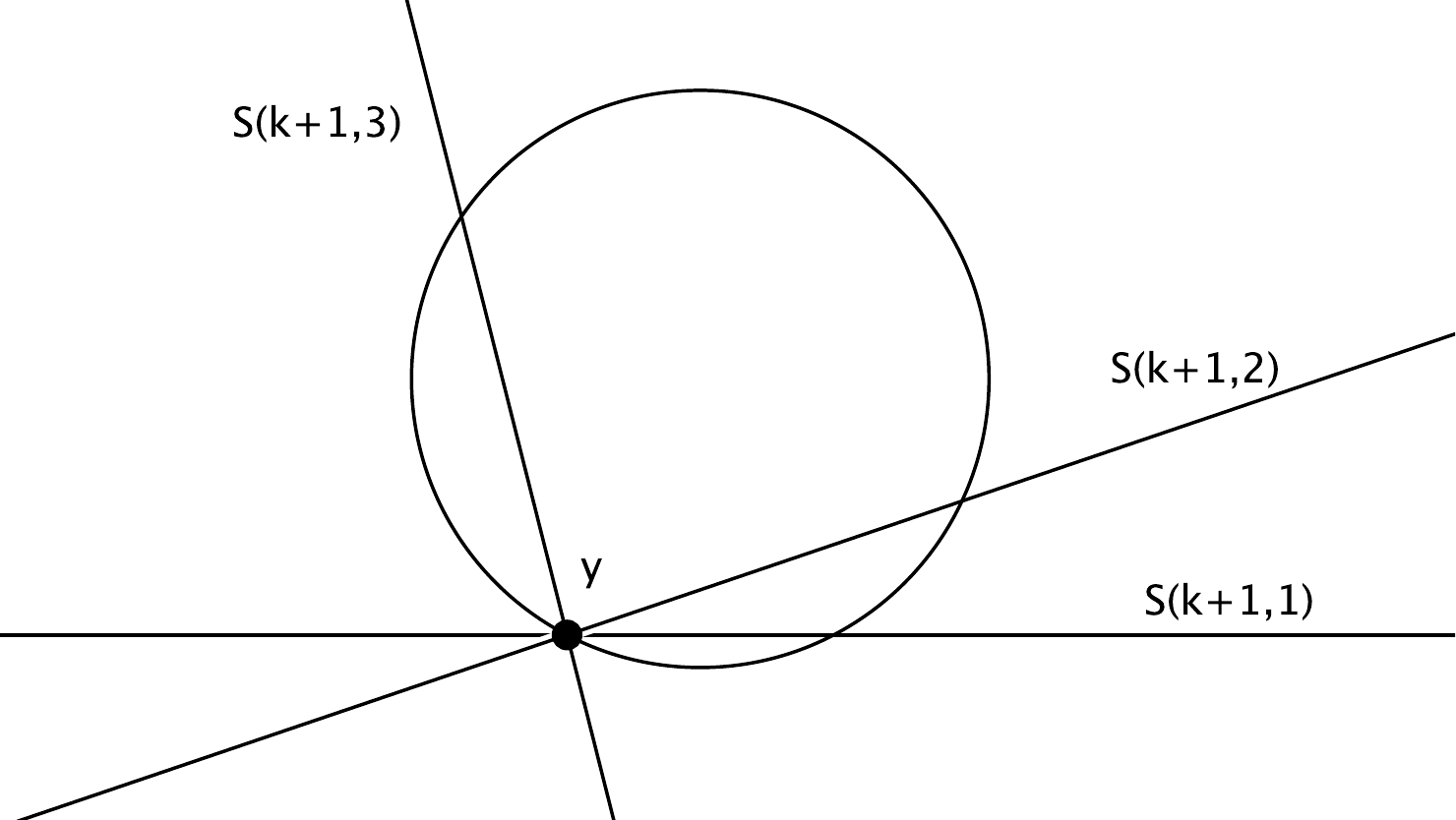}
\caption{}
\label{fig1}
\end{figure}

Under a \m transformation, we may assume that $S_k^0 = \mathbb{R}^k _\infty \subset \R^n_\infty$ and $S_{k+2} =\mathbb{R}^{k+2}_\infty\subset \R^n_\infty$ and $y = \vec{0}$. Then we have $S_{k+1,i}$ is the union of the point $\infty$ and a (k+1)-dimensional plane in $\mathbb{R}^{k+2}$ containing $\mathbb{R}^k$. If we consider the 2-dimensional plane $P$ spanned by $e_{k+1}$ and $e_{k+2}$ where $e_i$ is the usual basis in $\mathbb{R}^n$, then $S_{k+1,i}\cap P$ is a line passing through $\vec{0}$. It is clear that we can choose a circle $C$ in $P$ through $\vec{0}$ that not tangent to the three lines $S_{k+1,i}\cap P$ ($i=1,2,3$). This circle will intersect $S_{k+1,i}\cap P$ at another point other then $\vec 0$, which is not in $S_k^0$, so we proved the claim (see Figure~\ref{fig1}).
\end{proof}

Since $T(S_{k+1,i}-S_k^0) = \{x_i'\}$, we have $\{x_1',x_2',x_3'\}\subset T(C_y)$. Since $T$ is weakly circle preserving, $T(C_y)\subset (x_1'x_2'x_3')$, in particular, $y' := T(y)\in (x_1'x_2'x_3')$. This is true for any $y\in S_k^0$, so $T(S_k^0) \subset (x_1'x_2'x_3')$ which is a contradiction.\\

(2): Let $T(S_{k+2}) - T(S_k^0) = \{x_1',x_2'\}$. Fix $x_1,x_2$ in its preimage and let $S_{k+1,i} := (S_k^0,x_i)$. Again, $\dim(S_{k+1,i}) = k+1$, and $T(S_{k+1,i} - S_k^0) = \{x_i'\}$.

Suppose for contradiction that there exists $y\in S_{k+2}-S_k^0$ such that $y' := T(y) \in T(S_k^0)$, let $S_{k+1,y} = (S_k^0,y)$. Notice that it is necessary that $T(S_{k+1,y}) = T(S_k^0)$.

\begin{cl2}
$T(S_{k+1,y} - S_k^0) = \{y'\}$.
\end{cl2}
\begin{proof} (of Claim 3)

Again, under a \m transformation, we may assume that $S_k^0 = \mathbb{R}^k_\infty\subset\R^n_\infty$ and $S_{k+2} =\mathbb{R}^{k+2}_\infty\subset\R^n_\infty$.

Suppose for contradiction, then there exists $y_1,y_2$ on the opposite side of $S_k^0$ in $S_{k+1,y}$ such that $T(y_1)\neq T(y_2)$. Let $C$ be the circle through $y_1$, $y_2$ and $x_1$.

\begin{figure}[h!]
\centering
\includegraphics[width=70mm]{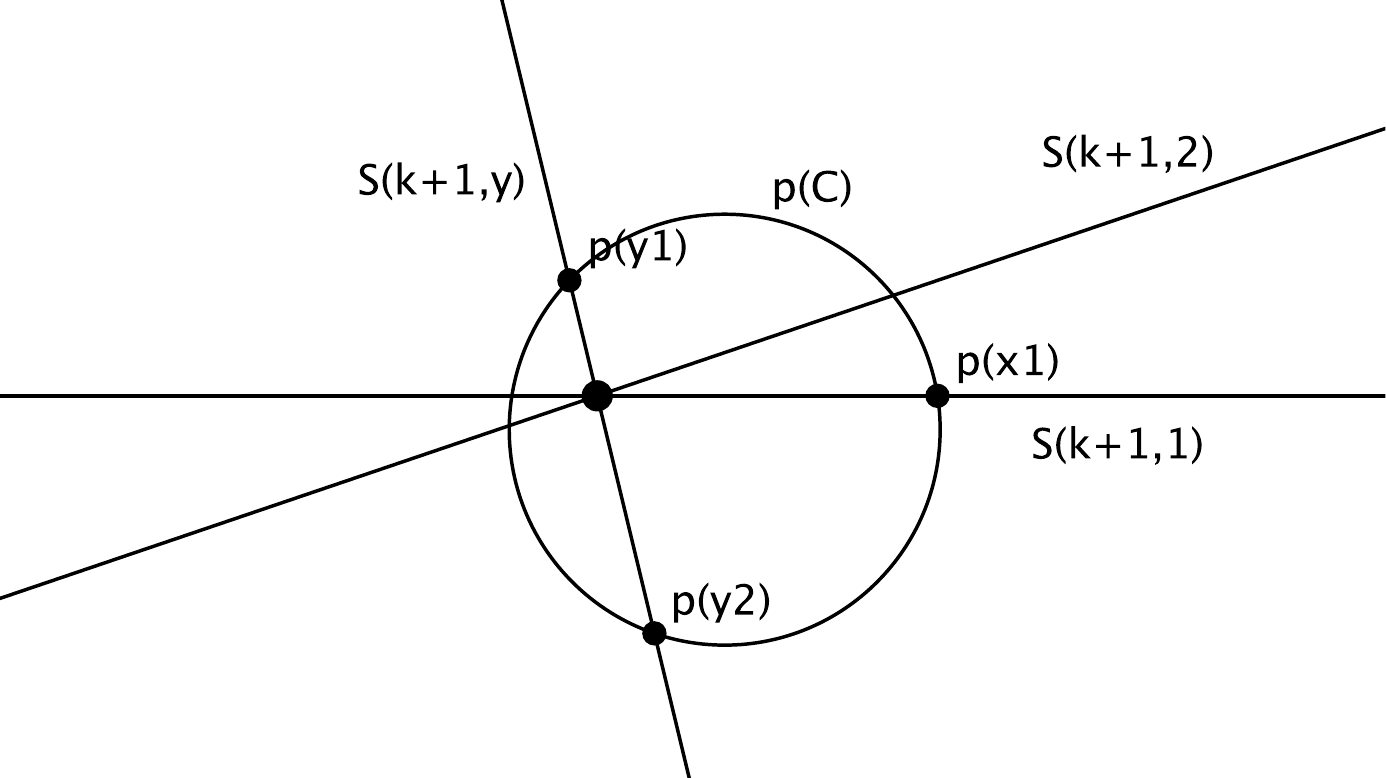}
\caption{}
\label{fig2}
\end{figure}

Now consider the projection map $p:\mathbb{R}^{k+2} \longrightarrow \mathbb{R}^2$ defined by $p(x_1,...,x_{k+2}) = (x_{k+1},x_{k+2})$. Notice that $p(S_{k+1,i}-\{\infty\})$ and $p(S_{k+1,y}-\{\infty\})$ are lines in $\mathbb{R}^2$, $p(y_1)$ and $p(y_2)$ are on the opposite side of $\vec{0}$. Since $p(C)$ is a closed simple loop through $p(x_1)$, $p(y_1)$ and $p(y_2)$, we have $p(C)\cap p(S_{k+1,2}-S_k^0) \neq \emptyset$ (see Figure~\ref{fig2}). Therefore, we have $C\cap S_{k+1,2}-S_k^0 \neq \emptyset$. This means $\{x_1',x_2',y_1',y_2'\}\subset T(C)$, so $x_1',x_2',y_1',y_2'$ are on a circle. This is a contradiction to $\dim(S_{k+2}')\geq k+2$ by lemma \ref{circle-in-S2}.
\end{proof}

Now replace $S_{k+1,3}$ by $S_{k+1,y}$ in claim 2, and exact same argument shows that $T(S_k^0) \subset (x_1'x_2'y')$ which is a contradiction.\\

Case $m \geq 3$:

Induction hypothesis: Assume the lemma holds for $1,...,m-1$.

(1): Given $S_{k+m}$ containing $S_k^0$ and $\dim(S_{k+m}')\geq k+m$. Suppose for contradiction that $|T(S_{k+m})-T(S_k^0)|\geq m+1$.

Let $\{x_1',...,x_m'\} \subset T(S_{k+m})-T(S_k^0)$ such that $T(S_k^0)$ and $\{x_1',...,x_m'\}$ determine a $(k+m)$-sphere, then $T(S_k^0)$ and any $l$-subset of $\{x_1',...,x_m'\}$ determine a $(k+l)$-sphere. Let $x_{m+1}'\in T(S_{k+m})-T(S_k^0)-\{x_1',...,x_m'\}$, and choose a point in $\{x_1',...,x_m'\}$ that not in the $(k+1)$-sphere determined by $T(S_k^0)$ and $x_{m+1}'$, say this point is $x_m'$. Fix $x_1,...,x_{m+1}$ in its preimage, and let $S_{k+m-1}:=(S_k^0,x_1,...,x_{m-1})$ and $S_{k+2} := (S_k^0,x_m,x_{m+1})$.

We let $D:= \dim(S_{k+m-1})$, then it is clear that $D \leq k+m-1$, and $\dim(S_{k+m-1}') = k+m-1\geq D$, so by induction hypothesis, we have $m-1\leq |T(S_{k+m-1}) - T(S_k^0)| = D-k\leq m-1$. This implies that $D = k+m-1$ and $T(S_{k+m-1}-S_k^0) = \{x_1',...,x_{m-1}'\}$.

Similarly, we have $\dim(S_{k+2}) = k+2$ and $T(S_{k+2}-S_k^0) = \{x_m',x_{m+1}'\}$.

Notice that $S_k^0\subset S_{k+m-1}\cap S_{k+2}$, so $S_{k+m-1}\cap S_{k+2}$ contains at least two points. By lemma \ref{dim-sphere-intersect}, we know $\dim(S_{k+m-1}\cap S_{k+2}) \geq (k+m-1)+(k+2)-(k+m) = k+1$. This means that there exists $x\in (S_{k+2}-S_k^0)\cap S_{k+m-1}$. Notice that $T(x) \in T(S_{k+2}-S_k^0) = \{x_m',x_{m+1}'\}$. Therefore, $T(S_k+m-1)-T(S_k^0)$ contains at least $m$ points, which is a contradiction.\\

(2): Let $T(S_{k+m}) - T(S_k^0) = \{x_1',...,x_m'\}$, and fix $x_1,...,x_m$ in its preimage, and let $S_{k+m-1} := (S_k^0,x_1,...,x_{m-1})$. Exact same argument in the proof of (1) of case $m\geq 3$, we have $\dim(S_{k+m-1}) = k+m-1$ and $T(S_{k+m-1}-S_k^0) = \{x_1',...,x_{m-1}'\}$.

Suppose for contradiction that there exists $y\in S_{k+m}-S_k^0$ such that $y':=T(y) \in T(S_k^0)$. Let $S_{k+2} := (S_k^0,x_m,y)$. Notice that $k+1\leq \dim(S_{k+2}) \leq k+2$ and $\dim(S_{k+2}')\geq k+1$. So by the induction hypothesis for (2) of $m=1$, we have $\dim(S_{k+2})\neq k+1$. Hence, $\dim(S_{k+2}) = k+2$.

Notice that $S_k^0\subset S_{k+m-1}\cap S_{k+2}$, so $S_{k+m-1}\cap S_{k+2}$ contains at least two points. By lemma \ref{dim-sphere-intersect}, we know $\dim(S_{k+m-1}\cap S_{k+2}) \geq (k+m-1)+(k+2)-(k+m) = k+1$. This means that there exists $x\in (S_{k+m-1}-S_k^0)\cap S_{k+2}$. Notice that $T(x) \in T(S_{k+m-1}-S_k^0) = \{x_1',...,x_{m-1}'\}$, say $T(x) = x_1'$. Then $S_{k+2}$ is a (k+2)-sphere containing $S_k^0$ with $\dim(S_{k+2}')\geq k+2$ (as it contains $T(S_k^0)$ and $x_1', x_m'$), so by induction hypothesis, we have $T(S_{k+2}-S_k^0) = \{x_1',x_m'\}$ which is a contradiction to $y'\in T(S_{k+2}-S_k^0)$.
\end{proof}

\begin{proof} (of the theorem \ref{thm-main})

We will prove the theorem by building up a chain $S_2^0\subset S_3^0\subset ... \subset S_n^0 = \mathbb{S}^n$, where $S_k^0$ is $k$-sphere in $\mathbb{S}^n$, such that $T$ restrict to $S_k^0$ is a \m transformation.

We will build this chain by induction.
Base case $k=2$: By assumption, there is a two sphere $S_2$ with $T(S_2)$ in circular general position, so by five-point theorem (Theorem \ref{5point}), we conclude that $T$ is a \m transformation on $S_2$.

The induction step: Assume that we have build the chain $S_2^0\subset S_3^0\subset ... \subset S_k^0$ ($k\leq n-1$), with $T$ being a \m transformation on each sphere, we will build $(k+1)$-sphere $S_{k+1}^0$. 

Case 1: There is an $(k+1)$-sphere $S_{k+1}$ containing $S_k^0$ such that $T(S_{k+1})-T(S_k^0)$ consists of at least two points. By the lemma \ref{2point-lemma}, we know that $T$ is a \m transformation on $S_{k+1}$, so we let $S_{k+1}^0 = S_{k+1}$.

Case 2: There is no $(k+1)$-sphere $S_{k+1}$ containing $S_k^0$ such that $T(S_{k+1})-T(S_k^0)$ consists of at least two points. In other words, for any $(k+1)$-sphere $S_{k+1}$ containing $S_k$, we have $|T(S_{k+1})-T(S_k^0)|\leq 1$. Then the hypothesis for lemma \ref{1point-lemma} is satisfied, so we conclude that in particular, $|T(\mathbb{S}^n)-T(S_k^0)|=m$ (notice $\dim((\mathbb{S}^n)') = n \geq n$). But on the other hand, $|T(\mathbb{S}^n)-T(S_k^0)|\geq m+1$ as $T(\mathbb{S}^n)$ is in general position. So this is a contradiction, which means case 2 cannot happen.

Therefore, the theorem follows.
\end{proof}

\section{Weakly Sphere-Preserving Maps}
We first show that weakly sphere-preserving maps are automatically weakly circle-preserving maps.

\begin{lem}\label{lem21}
Suppose $T:\mathbb{S}^n\longrightarrow \mathbb{S}^n$ is weakly sphere-preserving, 
and assume that $T(\mathbb{S}^n)$ is not contained in some $(n-1)$-sphere, i.e., $\dim((\mathbb{S}^n)') = n$.
Then $T$ maps $k$-spheres into $k$-spheres for all dimensions $k$ with $1 \le k \le n-1$.
In particular,  $T$ is weakly circle-preserving.
\end{lem}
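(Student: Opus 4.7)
The plan is to prove by downward induction on $k$ the stronger statement that $T$ sends every $k$-sphere into some $k$-sphere, for $k = n-1, n-2, \dots, 1$. The base case $k = n-1$ is precisely the weakly sphere-preserving hypothesis on $T$, and the ``in particular, $T$ is weakly circle-preserving'' clause of the lemma is the conclusion at $k = 1$.

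For the inductive step, fix $k$ with $1 \le k \le n-2$ and assume that $T$ sends every $(k+1)$-sphere into some $(k+1)$-sphere. Let $S_k$ be a $k$-sphere, set $A := (T(S_k))$, and suppose for contradiction that $\dim A \ge k+1$. For any point $x \in \mathbb{S}^n \setminus S_k$, the set $(S_k,x)$ is a genuine $(k+1)$-sphere, so by the inductive hypothesis $T((S_k,x))$ is contained in some $(k+1)$-sphere $B_x$. Since $T(S_k) \subset T((S_k,x)) \subset B_x$, we obtain $A \subset B_x$, and the chain $k+1 \le \dim A \le \dim B_x = k+1$ forces $A = B_x$. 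In particular $T(x) \in A$ for every $x \notin S_k$, and trivially $T(S_k) \subset A$, so $T(\mathbb{S}^n) \subset A$. But $\dim A = k+1 \le n-1$ contradicts the hypothesis that $T(\mathbb{S}^n)$ is not contained in any $(n-1)$-sphere. Hence $\dim A \le k$, completing the inductive step.

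The argument contains no serious obstacle beyond elementary dimension bookkeeping; the step ``$A \subset B_x$ with $\dim A = \dim B_x$ implies $A = B_x$'' is immediate, since a proper subsphere must have strictly smaller dimension. The essential use of the hypothesis $\dim((\mathbb{S}^n)') = n$ surfaces only at the final step of the inductive argument, where it rules out the degenerate scenario in which $T(\mathbb{S}^n)$ collapses into the single sphere $A$ of dimension $\le n-1$; without it the conclusion would fail for trivial dimension reasons.
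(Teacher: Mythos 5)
Your proof is correct and follows essentially the same route as the paper: downward induction on $k$, with the non-degeneracy hypothesis $\dim((\mathbb{S}^n)')=n$ invoked in the inductive step to prevent $T(S_k)$ from spanning a full $(k+1)$-sphere. The only cosmetic difference is that the paper argues directly by intersecting two distinct $(k+1)$-spheres $(S_k,x_1)'$ and $(S_k,x_2)'$ containing $T(S_k)$, whereas you argue by contradiction that all the spheres $B_x$ would coincide and swallow the whole image; both hinge on the same dimension count.
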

\begin{proof}
We will prove by downwards induction on $k$ that $T$ maps $k$-spheres into $k$-spheres.
The base case $k=n-1$ is true by the weakly sphere-preserving hypothesis.

For the induction step, 
assume  that $T$ maps $(k+1)$-spheres into $(k+1)$-spheres, for a fixed $k+1 \ge 2$. 
Given a $k$-sphere $S_k$, choose a point $x_1\in \mathbb{S}^n-S_k$, let $S_{k+1,1}:=(S_k,x_1)$, 
then by induction hypothesis, $\dim(S_{k+1,1}')=k+1$ (recall here $(S_k,x_1)$ means the smallest 
dimension sphere containing $S_k$ and $x_1$ and $S_{k+1,1}' = (T(S_{k+1,1}))$). 
Since $T(\mathbb{S}^n)$ is not contained in some $(n-1)$-sphere, there is an image point 
$x_2' \in \mathbb{S}^n-S_{k+1,1}'$. We fix $x_2 \in T^{-1}(x_2')$, and denote $S_{k+1,2}:=(S_k,x_2)$.
 Notice that $S_k\subset S_{k+1,1}\cap S_{k+1,2}$, so that
 \[
 T(S_k) \subset T(S_{k+1,1})\cap T(S_{k+1,2}) \subset S_{k+1,1}'\cap S_{k+1,2}'. 
 \]
 But $S_{k+1,1}'\neq S_{k+1,2}'$ as $x_2'\in S_{k+1,2}' - S_{k+1,1}'$, so $S_{k+1,1}'\cap S_{k+1,2}'$ is a sphere of dimension less than or equal to $k$. This proves that $T(S_k)$ lies in some $k$-sphere, and completes the induction step.
%By downward induction on $k$, we reach $k=1$, and conclude that $T$ is weakly circle-preserving.
\end{proof}

%%***********************************
%
%Remark 2.2
%
%%***********************************

\begin{rmk}
Notice that if $T(\mathbb{S}^n)$ is in spherical general position, then 
necessarily $T(\mathbb{S}^n)$ is not contained in any $(n-1)$-sphere. 
It follows that  a weakly sphere-preserving map having image in spherical general position satisfies the hypothesis of 
Lemma \ref{lem21},  hence is a weakly circle preserving map.
\end{rmk}

Now we have the following analogue theorem for weakly sphere-preserving maps.

\begin{theorem}
For $n\geq 3$, $T:\mathbb{S}^n\longrightarrow \mathbb{S}^n$ be a weakly sphere-preserving map. If $T(\mathbb{S}^n)$ is in spherical general position and there is a 2-sphere $S_2$ with $T(S_2)$ in circular general position, then $T$ is a \m transformation.
\end{theorem}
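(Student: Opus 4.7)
The plan is to reduce this theorem directly to the main theorem (Theorem \ref{thm-main}) via Lemma \ref{lem21}. The key observation, already recorded in the remark following Lemma \ref{lem21}, is that spherical general position of $T(\mathbb{S}^n)$ forces $T(\mathbb{S}^n)$ to not lie in any $(n-1)$-sphere (indeed, general position even requires at least two points off any such sphere). This is exactly the hypothesis needed to invoke Lemma \ref{lem21}.

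First I would verify that $T(\mathbb{S}^n)$ is not contained in any $(n-1)$-sphere: if it were, then the complement of that $(n-1)$-sphere would contain no points of $T(\mathbb{S}^n)$, violating spherical general position. Applying Lemma \ref{lem21} then yields that $T$ maps $k$-spheres into $k$-spheres for every $1 \le k \le n-1$. In particular, $T$ is weakly circle-preserving.

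At this point all the hypotheses of Theorem \ref{thm-main} are already built into the statement: $n \ge 3$, $T$ is weakly circle-preserving (just established), $T(\mathbb{S}^n)$ is in spherical general position (given), and there is a 2-sphere $S_2$ with $T(S_2)$ in circular general position (given). Therefore Theorem \ref{thm-main} applies directly and concludes that $T$ is a M\"obius transformation.

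There is essentially no obstacle here; the entire content is already packaged into the remark preceding the statement. The only substantive work was done earlier, namely the downward induction in Lemma \ref{lem21} showing that weakly sphere-preserving plus the nondegeneracy condition $\dim((\mathbb{S}^n)') = n$ forces preservation at every intermediate dimension. Since that induction is already available, the proof of the theorem is a one-line citation of Lemma \ref{lem21} followed by a one-line citation of Theorem \ref{thm-main}.
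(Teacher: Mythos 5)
Your proposal is correct and matches the paper's own proof exactly: the paper's argument is precisely the observation (recorded in its remark after Lemma \ref{lem21}) that spherical general position rules out containment in any $(n-1)$-sphere, so Lemma \ref{lem21} gives weak circle-preservation and Theorem \ref{thm-main} finishes. Nothing to add.
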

\begin{proof}
This follows immediately from Lemma \ref{lem21} and Theorem \ref{thm-main}
\end{proof}

\section{Acknowledgments}
The first author thanks N. Nygaard (University of Chicago) for helpful discussions. 
The second author worked on this project in an 
REU Summer Program at the University of Michigan with supervisor  J.  C. Lagarias. 
Both authors thank J. C. Lagarias for helpful discussions and for editorial assistance.
We acknowledge use of  the computer package {\em Cinderella 2} \cite{RK12}  to draw the figures in this paper.

\end{document}